\declaretheorem{theorem}
\declaretheorem[sibling=theorem, refname={conjecture, conjectures}, Refname={Conjecture, Conjectures}]{conjecture}
\declaretheorem[sibling=theorem]{lemma}
\declaretheorem[style=remark]{remark}
\declaretheorem[style=remark]{example}
\definecolor{pink1}{RGB}{219, 31, 175}
\definecolor{pink2}{RGB}{145, 31, 129}
\definecolor{pink3}{RGB}{84, 51, 76}
\title{The Hanna Neumann Conjecture and the rank of the join}
\author{Joshua E. Hunt\footnote{E-mail: \texttt{joshuahunt94@gmail.com} \newline
The author was funded by a Trinity College Summer Research Studentship.}}
\begin{document}
\maketitle

\abstract{The Hanna Neumann conjecture gives a bound on the intersection of finitely generated subgroups of free groups. We explore a natural extension of this result, which turns out to be true only in the finite index case, and provide counterexamples for the general case. We also see that the graph-based method of generating random subgroups of free groups developed by Bassino, Nicaud and Weil is well-suited to generating subgroups with non-trivial intersections. The same method is then used to generate a counterexample to a similar conjecture of Guzman.}

\section{Introduction}

Throughout, let $F$ be a finitely generated (non-trivial) free group, and $H, K$ be finitely generated subgroups of $F$. We define the \emph{reduced rank} of $H \leq F$ to be
\begin{equation*}
\overline r(H) = \max(0, \text{rank}(H)-1)
\end{equation*}
The Hanna Neumann conjecture states that
\begin{equation*}
\overline r(H \cap K) \leq \overline r(H) \, \overline r(K)
\end{equation*}
A strengthening of this, the Strengthened Hanna Neumann conjecture, was proposed by Walter Neumann \cite{Neu}:
\begin{equation*}
\sum_{HgK\text{ s.t. }H \cap gKg^{-1} \neq \{1\}} \overline r(H \cap g K g^{-1}) \leq \overline r(H) \, \overline r(K)
\end{equation*}
This was proved independently by Joel Friedman \cite{Fri} and Igor Mineyev \cite{Min} in 2011.

The results above do not involve the join of $H$ and $K$ (\emph{i.e.} the group generated by $H \cup K$, which we denote $H \vee K$). By way of analogy with the inclusion/exclusion principle, it seems natural to suppose that when $H \cap K$ is ``large'', $H \vee K$ is ``not much bigger'' than $H$ or $K$. Accordingly, Henry Wilton conjectured\footnote{In a private communication}

\begin{conjecture}[Wilton]
Let $H, K$ be finitely generated subgroups of $F$. Then
\begin{equation*}
\overline r (H \vee K) \, \overline r(H \cap K) \leq \overline r(H) \, \overline r(K)
\end{equation*}
\end{conjecture}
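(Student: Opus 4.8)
The plan is to pass to Stallings' folded graphs, where reduced rank is $-\chi$, and then to try to reduce the inequality to arithmetic of subgroup indices. Fix a basis of $F$ with rose $R$, and let $\Gamma_H,\Gamma_K$ be the pointed core graphs immersing into $R$ with fundamental groups $H,K$; for a finite connected core graph one has $\mathrm{rank}(\pi_1)=1-\chi$, so, away from the cases where $H$ or $K$ is trivial or infinite cyclic (which I would dispose of first, since then $H\cap K$ is cyclic and the left-hand side vanishes), $\overline r(H)=-\chi(\Gamma_H)$ and likewise for $K$. The intersection $H\cap K$ is the basepoint component of the fibre product $\Gamma_H\times_R\Gamma_K$, and the join $H\vee K$ is read from the graph obtained by wedging $\Gamma_H$ and $\Gamma_K$ at their basepoints and folding. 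Since folding only identifies edges it can only raise $\chi$, giving the crude additive bound $\overline r(H\vee K)\le \overline r(H)+\overline r(K)+1$; the real task is to promote such additive information to the stated multiplicative inequality.

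I would first settle the finite index case, where the inequality collapses to index arithmetic. If $H,K$ have finite index then so does $H\vee K$, and writing $r=\overline r(F)$ the Nielsen--Schreier formula gives $\overline r(H)=[F:H]\,r$ and the analogous equalities for $K$, $H\cap K$ and $H\vee K$. If $r=0$ the statement is trivial; otherwise, dividing by $r^2$, the conjecture becomes
\begin{equation*}
[F:H\vee K]\,[F:H\cap K]\ \le\ [F:H]\,[F:K].
\end{equation*}
This I would deduce from the observation that $h(H\cap K)\mapsto hK$ is a well-defined injection of the left cosets of $H\cap K$ in $H$ into those of $K$ in $H\vee K$, so that $[H:H\cap K]\le[H\vee K:K]$; multiplying by $[F:H]$ and substituting $[H\vee K:K]=[F:K]/[F:H\vee K]$ gives exactly the display. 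So the finite index case should be genuinely easy.

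For the general case my plan would be to play the Strengthened Hanna Neumann inequality against a lower bound for the contributions of the nontrivial double cosets. Regarding $H,K$ as subgroups of the free group $L=H\vee K$ and applying the theorem of Friedman and Mineyev inside $L$, one gets
\begin{equation*}
\overline r(H\cap K)\ +\sum_{g\neq 1}\overline r\!\left(H\cap gKg^{-1}\right)\ \le\ \overline r(H)\,\overline r(K),
\end{equation*}
the sum over the nontrivial double cosets $HgK$ of $L$ with nonabelian intersection. Comparing with the target, it would suffice to prove the ``missing'' inequality $(\overline r(H\vee K)-1)\,\overline r(H\cap K)\le \sum_{g\neq 1}\overline r(H\cap gKg^{-1})$, i.e.\ that a large join forces correspondingly large conjugate intersections. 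In finite index this reduces to the orbit-counting identity $[L:K]=\sum_g[H:H\cap gKg^{-1}]$, of which the coset injection used above is merely the $g=1$ summand.

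The step I expect to be the genuine obstacle is exactly this last inequality in infinite index, and it is the one I would scrutinise for a counterexample. There is no Schreier relation to pin either side down, and nothing in the Strengthened Hanna Neumann inequality forces new conjugate intersections to accompany a large join. Indeed the Hanna Neumann inequality already gives $\overline r(H\cap K)\le \overline r(H)\,\overline r(K)$, so the conjecture fails as soon as one can make the intersection nearly extremal while keeping $H\vee K$ nonabelian. Rather than push the comparison further I would therefore search for such configurations directly, generating $\Gamma_H,\Gamma_K$ at random by the graph model of Bassino, Nicaud and Weil --- which tends to produce subgroups with sizeable intersection --- and computing the four reduced ranks. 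I would expect this to turn up explicit counterexamples, so that the finite index theorem above is the best one can hope for.
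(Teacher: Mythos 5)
Your overall verdict agrees with the paper's: the stated inequality is false in general, and what survives is the finite index case (\cref{theorem:IEHNC}), with \cref{section:counterexamples} of the paper devoted to refuting the rest. Your finite index argument is correct but takes a different route. The paper works with Stallings graphs: after reducing to $F=H\vee K$, it observes that the pullback projection $W\to Y$ is a covering because $Z\to X$ is, whence $\overline r(H\cap K)=\overline r(H)\,|V(W)|/|V(Y)|\le\overline r(H)\,|V(Z)|$, while $\overline r(K)=\overline r(F)\,|V(Z)|$. Your Schreier-plus-coset-injection argument is the algebraic counterpart of this and is arguably cleaner, but as written it proves less: you assume both $H$ and $K$ have finite index, whereas \cref{theorem:IEHNC} assumes it only of $K$. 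The fix is easy and worth making: your injection $h(H\cap K)\mapsto hK$ gives $[H:H\cap K]\le[H\vee K:K]<\infty$, so Schreier applied inside $H$ gives $\overline r(H\cap K)=[H:H\cap K]\,\overline r(H)$, Schreier applied to $K\le H\vee K$ gives $\overline r(K)=[H\vee K:K]\,\overline r(H\vee K)$, and combining the three relations yields the theorem with no hypothesis on $H$ at all.

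The genuine gap is at the end: ``I would expect this to turn up explicit counterexamples'' is a prediction, not a refutation. The prediction is accurate --- the paper's first counterexamples were indeed found by a computer search of exactly the kind you propose, and the Bassino--Nicaud--Weil graph-based model finds them far more often than the word-based one (\cref{section:graph generation}) --- but the conjecture is only settled by exhibiting an example, and there is a simple explicit family (\cref{example:IEHNC counterexample}) realizing precisely your heuristic of a nearly extremal intersection sitting next to a large join: in $F$ free on $x_1,\ldots,x_{\ell+2}$ take $K=\langle x_1,x_2\rangle$ and
\begin{equation*}
H=\langle x_2x_1^{-1},\,x_1x_2x_1^{-2},\,\ldots,\,x_1^{v-2}x_2x_1^{-(v-1)},\,x_3,\,\ldots,\,x_{\ell+2}\rangle,
\end{equation*}
so that $H\cap K$ has rank $v-1$, $H\vee K=F$, and
\begin{equation*}
\frac{\overline r(H\cap K)\,\overline r(H\vee K)}{\overline r(H)\,\overline r(K)}=\frac{(v-2)(\ell+1)}{v+\ell-2},
\end{equation*}
which is unbounded as $v=\ell\to\infty$, so no constant $c$ rescues the inequality. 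The same example also quantifies why your SHNC comparison had to fail: your ``missing inequality'' would require the nontrivial double cosets to contribute at least $(\overline r(H\vee K)-1)\,\overline r(H\cap K)=\ell(v-2)$, whereas the Strengthened Hanna Neumann theorem itself caps their total contribution by $\overline r(H)\,\overline r(K)-\overline r(H\cap K)=\ell$.
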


We will refer to this as the Inclusion/Exclusion Hanna Neumann Conjecture (IEHNC). This turns out to be true if we also assume that $K$ is of finite index in $H \vee K$:

\begin{restatable}{theorem}{iehnc}
\label{theorem:IEHNC}
Let $H, K$ be finitely generated subgroups of $F$, with $K$ of finite index in $F$. Then
\begin{equation*}
\overline r (H \vee K) \, \overline r(H \cap K) \leq \overline r(H) \, \overline r(K)
\end{equation*}
\end{restatable}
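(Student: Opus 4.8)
The plan is to exploit the finite-index hypothesis to convert every reduced rank appearing in the inequality into an honest subgroup index via the Nielsen--Schreier formula, thereby reducing the whole statement to an elementary comparison of indices. First I would observe that the reduced rank of a subgroup depends only on its isomorphism type as a free group, not on the ambient group in which it sits; since $H$, $K$, $H\cap K$ and $H\vee K$ all live inside the finitely generated free group $H\vee K$, I may replace $F$ by $H\vee K$ without changing any of the four quantities in the inequality. The hypothesis that $K$ has finite index in $F$ is inherited (indeed $[H\vee K:K]\le[F:K]$), so after this reduction I have $F=H\vee K$ and $K$ of finite index $n:=[H\vee K:K]$.

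Next I would apply the Nielsen--Schreier index formula to $K\le H\vee K$: a subgroup of index $n$ in a free group of rank $r$ is free of rank $1+n(r-1)$, which translates exactly into $\overline r(K)=n\,\overline r(H\vee K)$ (the degenerate cases $\operatorname{rank}(H\vee K)\le 1$ both give $0=n\cdot 0$). Substituting this into the right-hand side, the target inequality becomes
\[
\overline r(H\vee K)\,\overline r(H\cap K)\ \le\ \overline r(H)\cdot n\,\overline r(H\vee K),
\]
so since $\overline r(H\vee K)\ge 0$ it suffices to establish $\overline r(H\cap K)\le n\,\overline r(H)$; multiplying back through by $\overline r(H\vee K)$ then recovers the claim even when $\overline r(H\vee K)=0$.

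The heart of the argument is then a second application of Nielsen--Schreier, this time to $H\cap K\le H$. The key point is the index bound $[H:H\cap K]\le[H\vee K:K]=n$: the map $h(H\cap K)\mapsto hK$ is a well-defined injection of $H/(H\cap K)$ into $(H\vee K)/K$, since $hK=h'K$ with $h,h'\in H$ forces $h^{-1}h'\in H\cap K$. In particular $H\cap K$ has finite index $m:=[H:H\cap K]\le n$ in $H$, so Nielsen--Schreier gives $\overline r(H\cap K)=m\,\overline r(H)\le n\,\overline r(H)$, completing the proof.

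I do not expect a serious obstacle: the content lies entirely in recognising that the finite-index assumption makes the Nielsen--Schreier index formula applicable, turning each reduced rank into an exact multiple of an index, at which point the conjecture collapses to the elementary fact $[H:H\cap K]\le[H\vee K:K]$. The only thing to watch is the bookkeeping in the degenerate cases where $H$, $K$, or $H\vee K$ has rank $0$ or $1$, so that the $\max(0,\cdot)$ in the definition of reduced rank is handled consistently; but in each such case the relevant side of the inequality is simply zero. It is worth noting that this is precisely where the general IEHNC must break down: without finite index the Nielsen--Schreier index formula is unavailable, the reduced rank of $K$ is no longer pinned to that of $H\vee K$, and the join can be far larger than this exact bookkeeping allows --- presumably the source of the counterexamples promised in the abstract.
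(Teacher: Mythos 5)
Your proof is correct, but it takes a genuinely different (and more elementary) route than the paper. The paper works entirely with Stallings graphs: it represents $H$ and $K$ by immersions of finite graphs over a rose, notes that the graph $Z$ representing $K$ is a finite covering (\cref{lemma:covering iff finite index}), forms the pullback to realise $H \cap K$ (\cref{theorem:pullback is intersection}), observes that the induced map $W \to Y$ on the basepoint component is again a covering, and then combines the index--rank relation of \cref{lemma:index rank relation} with the vertex count $|V(W)| \leq |V(Y)|\,|V(Z)|$. Your argument replaces all of this machinery with classical group theory: the paper's \cref{lemma:index rank relation} is exactly the Nielsen--Schreier index formula you invoke, and the paper's vertex bound is precisely your coset-map injectivity argument $H/(H \cap K) \hookrightarrow (H \vee K)/K$, i.e.\ $[H : H \cap K] \leq [H \vee K : K]$, in graph-theoretic disguise. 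So the two proofs share the same skeleton --- reduce to $F = H \vee K$, write $\overline r(K) = n\, \overline r(H \vee K)$ with $n = [H \vee K : K]$, and bound $\overline r(H \cap K) \leq n\, \overline r(H)$ --- but yours is self-contained modulo Nielsen--Schreier and needs no topology, while the paper's version makes the finite-index covering structure geometrically visible and reuses the pullback construction on which the rest of the paper (the counterexamples and the computer search) depends anyway. Your handling of the degenerate cases where some reduced rank vanishes is also sound, including the observation that multiplying back through by $\overline r(H \vee K)$ covers the case $\overline r(H \vee K) = 0$.
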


Unfortunately, the IEHNC is no longer true if we do not make this assumption, and so does not give a strengthening of the Hanna Neumann conjecture.

Note that the Strong Hanna Neumann conjecture is invariant under conjugation of $H$ or $K$ in $F$, while the IEHNC is not. As such, we would not expect to be able to combine the two results.

The IEHNC is slightly stronger than Rosemary Guzman's ``Group-theoretic conjecture'' \cite{Guz}:

\begin{conjecture}[Guzman]
\label{conjecture:guzman}
Let $H, K$ be finitely generated subgroups of $F$ with
\begin{equation*}
m = \text{rk}(H) = \text{rk}(K) \leq \text{rk}(H \cap K)
\end{equation*}
for some $m \geq 2$. Then
\begin{equation*}
\text{rk}(H \vee K) \leq m
\end{equation*}
\end{conjecture}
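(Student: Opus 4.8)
The plan is to deduce Guzman's conjecture directly from the (stronger) IEHNC, as anticipated by the paper's own observation that the IEHNC implies it. First I would translate the hypotheses into reduced-rank language: since $m \geq 2$ we have $\overline r(H) = \overline r(K) = m - 1 \geq 1$, and since $\text{rk}(H \cap K) \geq m \geq 2$ the intersection is non-trivial with $\overline r(H \cap K) = \text{rk}(H \cap K) - 1 \geq m - 1$. If the IEHNC holds for $H$ and $K$, then, using $\overline r(H \vee K) \geq 0$ together with $\overline r(H \cap K) \geq m-1$,
\begin{equation*}
(m-1)\,\overline r(H \vee K) \;\leq\; \overline r(H \cap K)\,\overline r(H \vee K) \;\leq\; \overline r(H)\,\overline r(K) \;=\; (m-1)^2 .
\end{equation*}
Dividing by $m - 1 > 0$ gives $\overline r(H \vee K) \leq m - 1$. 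As $H \vee K \supseteq H$ is non-trivial and finitely generated, $\overline r(H \vee K) = \text{rk}(H \vee K) - 1$, and hence $\text{rk}(H \vee K) \leq m$, as required.

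This reduction is the entire argument wherever the IEHNC is available. In particular, combining it with \Cref{theorem:IEHNC} settles Guzman's conjecture in the case where $K$ is of finite index in $F$: there the finite-index IEHNC supplies the middle inequality above and the conclusion follows verbatim. The genuine content of the conjecture therefore lies in the infinite-index case, where the IEHNC is no longer at our disposal and a direct argument is needed.

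For the general case I would pass to Stallings core graphs over a rose $R$ with $\pi_1(R) = F$. Model $H$ and $K$ by based immersions $\Gamma_H, \Gamma_K \to R$, so that $\overline r(H) = -\chi(\Gamma_H)$ and likewise for $K$; the basepoint component of the fibre product $\Gamma_H \times_R \Gamma_K$ is the core graph of $H \cap K$, giving direct control of $\overline r(H \cap K)$. The join comes from the opposite, pushout, side: wedge $\Gamma_H$ and $\Gamma_K$ at their basepoints and fold to obtain the core graph $\Gamma_{H \vee K}$. Wedging gives $\overline r = \overline r(H) + \overline r(K) + 1 = 2m - 1$, and each rank-dropping fold (one identifying two edges that already share both endpoints) lowers $\overline r$ by exactly one; thus the target bound $\overline r(H \vee K) \leq m - 1$ is equivalent to exhibiting at least $m$ rank-dropping folds forced by the hypothesis on $\text{rk}(H \cap K)$. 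The strategy would be to show that the common subgraph recorded by the fibre product must be folded together inside $\Gamma_{H \vee K}$, converting intersection rank into folds.

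The main obstacle is precisely this last step: the intersection is governed by a pullback and the join by a pushout, with no clean identity linking the two. The rank of the join depends on the whole folding sequence, which is not monotone in any quantity controlled by $\text{rk}(H \cap K)$ alone, and a large intersection need not force enough rank-dropping folds once $K$ has infinite index. I expect the argument to stall here, and---consistent with the paper's stated aim of producing a counterexample---this is plausibly the point at which the conjecture actually fails. Accordingly, I would run the attempted bound in parallel with a counterexample search, using the Bassino--Nicaud--Weil random-subgroup construction to generate pairs $H, K$ with large intersection and test whether $\text{rk}(H \vee K)$ can exceed $m$ in the infinite-index regime.
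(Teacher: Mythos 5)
You were right not to claim a proof, because no proof exists: in this paper the statement is a conjecture that gets \emph{refuted}, and the paper's ``own proof'' is a counterexample. \Cref{example:guzman counterexample} takes $F = F(a,b,c,d,x,y)$, $H = \langle a, b, x, y^2, yxy^{-1} \rangle$ and $K = \langle c, d, y, x^2, xyx^{-1} \rangle$: here $m = \text{rk}(H) = \text{rk}(K) = 5$ and $H \cap K = \langle y^2, yx^2y^{-1}, x^2, yxy^{-1}x, yxyx \rangle$ has rank $5 \geq m$, yet $H \vee K = F$ has rank $6 > m$. (The paper notes that $m = 2, 3$ are genuine theorems, due to Louder--McReynolds, Kent and Guzman respectively, and that $m = 4$ remains open, so the failure at $m = 5$ is the smallest the paper exhibits.) Your positive partial results are correct and coincide exactly with what the paper establishes: the division argument showing that the IEHNC implies \cref{conjecture:guzman} is precisely the paper's (unelaborated) remark to that effect, and combining it with \cref{theorem:IEHNC} does settle the conjecture when $K$ has finite index in $F$.

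Your diagnosis of where a direct argument must stall is also exactly where the conjecture dies, and the counterexample makes this quantitative. The wedge of the core graphs has $\overline r = 2m - 1 = 9$, and your bound requires at least $m = 5$ rank-dropping folds; but in \cref{example:guzman counterexample} the folding sequence only brings $\overline r$ down to $\overline r(H \vee K) = 5$, i.e.\ four rank-dropping folds occur --- one short of what you need --- even though the pullback component has $\overline r(H \cap K) = 4$. So a large intersection indeed fails to force enough folds: the pullback and the pushout are simply not linked in the way the conjecture would require. Finally, your proposed fallback of searching for counterexamples with the Bassino--Nicaud--Weil graph-based distribution is, verbatim, the method of \cref{section:graph generation} by which the paper found its first counterexamples to both \cref{conjecture:guzman} and the IEHNC. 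Judged against the paper, your proposal contains no error of reasoning; the only gap is in the statement itself, and you correctly predicted both that it fails in the infinite-index regime and how one would find the witness.
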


Note that the IEHNC implies Guzman's conjecture. We give a counterexample to both Guzman's conjecture and the IEHNC (without the finite index assumption) in \cref{section:counterexamples}. Furthermore, \cref{example:IEHNC counterexample} in fact shows that there is no $c > 0$ such that
\begin{equation*}
\overline r (H \vee K) \, \overline r(H \cap K) \leq c \; \overline r(H) \, \overline r(K)
\end{equation*}
for all $H, K$ finitely generated subgroups of $F$.

The counterexample given to Guzman's conjecture has $m = 5$. Louder and McReynolds \cite{Lou} and Kent \cite{Ken} both proved conjecture \ref{conjecture:guzman} for the case $m=2$, and Guzman herself proved it for $m=3$ \cite{Guz}, so it remains to determine whether or not it holds for $m = 4$.

The first counterexamples to both the IEHNC and Guzman's conjecture were generated using a computer search, with the software package GAP. In \cref{section:graph generation} we discuss the methods used for this, and their suitability to investigating similar problems about free groups. We would be more than happy to share the source code on request.

\subsection*{Acknowledgements}

The author gratefully acknowledges the support of a Trinity College Summer Research Studentship, and would like to thank Henry Wilton for supervising the project. He would also like to thank Trenton Schirmer for pointing out the connection between the IEHNC and Guzman's conjecture.

\section{Stallings graphs}

\subsection{The category of graphs}
Finitely generated subgroups of free groups can be represented by immersions (locally injective maps) of finite graphs, as introduced by Stallings in \cite{Sta}. We consider a graph to be a pair $X = (V(X), E(X))$ of sets of vertices and directed edges, along with a function $E \to E$ and a function $E \to V$. A graph is finite if both its vertex set and edge set are finite. For each $e \in E$ we have an associated edge $\overline e \in E$, the reversal of $e$, and an associated vertex $\iota(e) \in V$, the initial vertex of $e$. We require that $\overline {\overline e} = e$ and $\overline e \neq e$. We define the terminal vertex of $e$ to be $\tau(e) = \iota(\overline e)$.

A map of finite graphs $f: X \to Y$ is a pair of functions ${E(X) \to E(Y)}$ and ${V(X) \to V(Y)}$ such that this structure is preserved, \emph{i.e.} $f(\iota(e)) = \iota(f(e))$ and $\overline{f(e)} = f(\overline e)$. This determines a category of finite graphs in which we may consider pullbacks, pushouts, products, and so on.

The pullback in particular is important (\cref{theorem:pullback is intersection} below). This always exists, and can be constructed explicitly. Given maps of finite graphs

\begin{center}
\begin{tikzcd}
& Y \arrow{d}{f} \\
Z \arrow{r}{g} & X \\
\end{tikzcd}
\end{center}
the pullback is given by
\begin{equation*}
V(Y \times_X Z) = \{(v, v') \in V(Y) \times V(Z) : f(v) = g(v')\}
\end{equation*}
\begin{equation*}
E(Y \times_X Z) = \{(e, e') \in E(Y) \times E(Z) : f(e) = g(e')\} \\
\end{equation*}

\subsection{Immersions and coverings}

We define the \emph{star of $v$ in $X$} to be the set 
\begin{equation*}
\text{St}_X(v) = \{e \in E(X) : \iota(e) = v\}
\end{equation*}
Given a map of graphs $f: X \to Y$ and a vertex $v \in V(X)$, we get an induced map $\text{St}_X(v) \to \text{St}_Y(f(v)); e \mapsto f(e)$. A map of graphs is said to be an \emph{immersion} if this induced map is injective for every $v \in V(X)$, and a \emph{covering} if it is bijective for every $v \in V(X)$. (We often denote immersions as $f: X \looparrowright Y$.)

Any map of based graphs $f: (X, x_0) \to (Y, y_0)$ induces a homomorphism of fundamental groups $f_*: \pi_1(X, x_0) \to \pi_1(Y, f(x_0))$ for any $x_0 \in V(X)$. Furthermore, if $f$ is an immersion then this homomorphism is in fact injective. Using the technique of Stallings folding we can represent any finitely generated subgroup $H \leq \pi_1(Y, y_0)$  ($Y$ a finite graph) as a based immersion ${f: (X, x_0) \looparrowright (Y, y_0)}$, where $f_* \pi_1(X, x_0) = H$ and $X$ is a finite graph. For more details, see section 5.4 of \cite{Sta}.

\begin{remark}
Note that $r(H) - 1 = -\chi(X)$ (where $\pi_1(X, x_0) \cong H$). This is because if we pick a maximal spanning tree of $X$, then each edge of $X$ not in this tree determines a basis element of $\pi_1(X, x_0)$ and reduces the Euler characteristic of the graph by 1, and the Euler characteristic of the maximal spanning tree itself is 1. Therefore questions about reduced rank can be reduced to questions about Euler characteristic, which is the technique used in the proofs below.
\end{remark}

\subsection{Useful results about subgroups and graphs}
A key tool needed to investigate the IEHNC using graphs is the following theorem from \cite{Sta}:

\begin{samepage}
\begin{theorem}
\label{theorem:pullback is intersection}
Let
\begin{center}
\begin{tikzcd}
Y_3 \arrow{r}{g_1} \arrow{d}{g_2}
& Y_1 \arrow{d}{f_1}\\
Y_2 \arrow{r}{f_2} & X
\end{tikzcd}
\end{center}
be a pullback diagram of finite graphs. Suppose that $f_1$, $f_2$ are immersions. Let $v_1 \in V(Y_1)$, $v_2 \in V(Y_2), w \in V(X)$ such that
\begin{equation*}
f_1(v_1) = f_2(v_2) = w
\end{equation*}
Let $v_3 = (v_1, v_2) \in V(Y_3)$. Define $f_3 = f_1g_1 = f_2 g_2$. Let 
\begin{equation*}
H_i = (f_i)_* (\pi_1(Y_i, v_i))
\end{equation*}
Then $H_3 = H_1 \cap H_2$.
\end{theorem}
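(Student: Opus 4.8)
The plan is to prove the two inclusions $H_3 \subseteq H_1 \cap H_2$ and $H_1 \cap H_2 \subseteq H_3$ separately. The first is formal: since $f_3 = f_1 g_1$ we have $(f_3)_* = (f_1)_* (g_1)_*$, and because $g_1(v_3) = v_1$ the map $(g_1)_*$ sends $\pi_1(Y_3, v_3)$ into $\pi_1(Y_1, v_1)$, so $H_3 \subseteq (f_1)_* \pi_1(Y_1, v_1) = H_1$. Using $f_3 = f_2 g_2$ in the same way gives $H_3 \subseteq H_2$, hence $H_3 \subseteq H_1 \cap H_2$. This direction uses only functoriality of $\pi_1$ and nothing about immersions.

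The substance is the reverse inclusion, and the tool I would use is \emph{unique path-lifting} for immersions. First I would record the elementary fact that an immersion maps reduced edge-paths to reduced edge-paths: if $a_{i+1} \neq \overline{a_i}$ in $Y_j$ then, since $\overline{a_i}$ and $a_{i+1}$ share the initial vertex $\tau(a_i)$ and $f_j$ is injective on its star, $f_j(a_{i+1}) \neq \overline{f_j(a_i)}$. Combined with the observation that a lift of a given path starting at a prescribed vertex is unique (at each stage the immersion property leaves at most one choice of next edge), this yields the characterization I want: for $j \in \{1,2\}$, an element $\gamma \in \pi_1(X, w)$ lies in $H_j$ if and only if the reduced edge-loop $p$ representing $\gamma$ lifts through $f_j$ to a loop based at $v_j$. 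Indeed, if $\gamma \in H_j$ then $\gamma = (f_j)_*[\ell]$ for some reduced loop $\ell$ at $v_j$, and $f_j(\ell)$ is then a reduced loop representing $\gamma$, so $f_j(\ell) = p$ and $\ell$ is the desired lift; the converse is immediate.

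Now take $\gamma \in H_1 \cap H_2$ and let $p = e_1 e_2 \cdots e_n$ be its reduced representative in $X$. By the characterization, $p$ lifts to a loop $\ell_1 = a_1 \cdots a_n$ at $v_1$ in $Y_1$ and to a loop $\ell_2 = b_1 \cdots b_n$ at $v_2$ in $Y_2$, with $f_1(a_i) = e_i = f_2(b_i)$ for every $i$. The explicit description of the pullback then lets me splice these together: each pair $(a_i, b_i)$ lies in $E(Y_3)$ because $f_1(a_i) = f_2(b_i)$, and the conditions $f_1(\tau(a_i)) = \tau(e_i) = f_2(\tau(b_i))$ guarantee that the intermediate and terminal vertices $(\tau(a_i), \tau(b_i))$ lie in $V(Y_3)$. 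Since $\ell_1$ and $\ell_2$ are paths, consecutive edges match up, so $\ell_3 = (a_1, b_1) \cdots (a_n, b_n)$ is an edge-path; it starts and ends at $(v_1, v_2) = v_3$, hence is a loop. Finally $g_1(\ell_3) = \ell_1$, so $f_3(\ell_3) = f_1(\ell_1) = p$ and therefore $\gamma = [p] = (f_3)_*[\ell_3] \in H_3$. This gives $H_1 \cap H_2 \subseteq H_3$ and completes the proof.

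I would expect the only real obstacle to be the lifting characterization in the middle paragraph, specifically the need to pass to the \emph{reduced} representative of $\gamma$, since an arbitrary representative need not lift through an immersion (immersions are not coverings). Once existence and uniqueness of the reduced lift is in hand, the splicing in the pullback is a routine edge-by-edge verification driven entirely by the defining equations $f_1(v) = f_2(v')$ and $f_1(e) = f_2(e')$ of $V(Y_3)$ and $E(Y_3)$.
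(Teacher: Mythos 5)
Your proof is correct. Note, however, that the paper does not actually prove this statement: its ``proof'' is a one-line citation to Stallings (\emph{Topology of Finite Graphs}, Theorem 5.5), so there is no in-paper argument to compare against. What you have written is essentially a self-contained reconstruction of Stallings' own argument: the easy inclusion $H_3 \subseteq H_1 \cap H_2$ by functoriality, and the reverse inclusion by (i) the observation that immersions carry reduced edge-paths to reduced edge-paths, (ii) uniqueness of the reduced representative of a homotopy class in a graph, which together give the characterization of membership in $H_j$ via lifting the reduced loop, and (iii) the edge-by-edge splicing of the two lifts into a loop in the explicit pullback. All three steps are sound --- in particular you correctly identify that one must lift the \emph{reduced} representative, since immersions do not have general path-lifting, and you verify that the spliced pairs $(a_i, b_i)$ genuinely form an edge-path in $Y_3$ using the defining equations of the pullback. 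Your proof therefore fills in exactly the details the paper outsources, and does so along the same lines as the cited source; the only inessential element is the appeal to uniqueness of lifts, which your argument never actually uses (uniqueness of the reduced representative in $X$ is what does the work).
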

\begin{proof}
See \cite{Sta}, theorem 5.5.
\end{proof}
\end{samepage}

This means that we can explicitly construct the immersion representing the intersection of any two subgroups.

Finally, when the subgroup is of finite index we get extra information about the immersion representing it:

\begin{lemma}
\label{lemma:covering iff finite index}
Let $H \leq F$ be represented by an immersion of finite graphs $f: Y \to X$, where $X$ is a rose (\emph{i.e.} has only one vertex). Then $H$ is of finite index in $F$ iff $Y$ is a covering space of $X$.
\end{lemma}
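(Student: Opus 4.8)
The plan is to deduce both directions from classical covering space theory, exploiting that a rose has a single vertex, so the star of every vertex of $X$ is the whole edge set $E(X)$. Consequently $f$ is a covering precisely when the star map $\mathrm{St}_Y(v)\to E(X)$ is a bijection at every $v\in V(Y)$, i.e. when each vertex of $Y$ carries, for every $e\in E(X)$, exactly one edge mapping to $e$. We may assume $Y$ is connected, since $H=f_*\pi_1(Y,y_0)$ depends only on the component of $y_0$. Throughout write $F=\pi_1(X,x_0)$.

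The easy direction is ``$Y$ a covering $\Rightarrow$ $H$ finite index''. If $f\colon Y\to X$ is a covering with $Y$ finite and connected, then it is a finite-sheeted covering of the connected graph $X$, and the number of sheets equals the cardinality of the fibre $f^{-1}(x_0)$. Since $X$ is a rose this fibre is all of $V(Y)$, which is finite. By the standard correspondence between sheets and index, $[F:H]=[\pi_1(X,x_0):f_*\pi_1(Y,y_0)]$ equals the number of sheets, hence is finite.

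For the converse I would compare $f$ with the based covering $p\colon(\hat Y,\hat y_0)\to(X,x_0)$ corresponding to $H\le F$, which exists and is unique up to based isomorphism. Since $f_*\pi_1(Y,y_0)=H=p_*\pi_1(\hat Y,\hat y_0)$, the lifting criterion yields a lift $\tilde f\colon(Y,y_0)\to(\hat Y,\hat y_0)$ with $p\tilde f=f$; because $f$ and $p$ are immersions and $Y$ is folded, it is standard that $\tilde f$ embeds $Y$ onto the core of $\hat Y$ (the smallest subgraph carrying $\pi_1$), with $\hat Y$ obtained from $Y$ by attaching trees at its vertices. Now assume $[F:H]<\infty$. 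Then $\hat Y$ is a finite-sheeted covering of the rose, so at every vertex of $\hat Y$ the star map is a bijection onto $E(X)$; in particular every vertex has valence $2\,\mathrm{rank}(F)\ge 2$, so none has valence $1$. A graph with no valence-$1$ vertex has no nontrivial trees to prune, so $\hat Y$ equals its own core, giving $Y=\hat Y$, and hence $f$ is itself a covering.

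The main obstacle is this converse, and specifically the claim that finite index forces $Y=\hat Y$; its content is the interplay between the combinatorial completeness condition (``full star at every vertex'') and the topological notion of covering. Concretely, one must rule out that $\hat Y$ properly contains the core $Y$: since $\hat Y$ is a covering, any subtree hanging off the core would have all interior vertices of valence $\ge 2$ and therefore be infinite, which is impossible once $[F:H]<\infty$ makes $\hat Y$ finite. Making this pruning/valence argument precise — including the low-rank case $\mathrm{rank}(F)=1$, where coverings are cycles or the bi-infinite line — is where care is needed; everything else follows from the dictionary between subgroups, coverings, and sheet counts.
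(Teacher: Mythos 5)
Your argument is correct, but it necessarily takes a different route from the paper, because the paper does not actually prove this lemma: it disposes of it with a citation to Remark 7.6 of Stallings \cite{Sta}. Your proof is the standard subgroup--covering correspondence made explicit, and it holds up. The forward direction is the usual sheets-equals-index count, using that the fibre over the rose's unique vertex is all of $V(Y)$; the converse lifts $f$ through the covering $p\colon (\hat Y, \hat y_0)\to (X,x_0)$ corresponding to $H$, notes that finite index makes $\hat Y$ a finite graph in which every vertex has full star (valence $2\,\mathrm{rank}(F)\geq 2$), so $\hat Y$ has nothing to prune, equals its core, and must coincide with the embedded copy of $Y$. Three points of precision. (i) The lift $\tilde f$ embeds $Y$ with image \emph{containing} the based core of $\hat Y$, not necessarily equal to it, since nothing in the hypotheses forces $Y$ itself to be a core graph; this does not affect your conclusion, as the image is then squeezed between the core and $\hat Y$, which coincide. (ii) The injectivity of $\tilde f$, which you invoke as standard, really is standard (it follows from uniqueness of reduced paths with a given label issuing from the basepoint of a folded graph), and your argument genuinely needs it: surjectivity of $\tilde f$ alone would not yield that $f$ has full stars at every vertex. (iii) Treating connectivity of $Y$ as a harmless reduction is slightly off for the converse direction: if $Y$ were allowed a second component (say a stray arc), then ``finite index implies covering'' would be false as literally stated; connectivity should instead be read as part of what ``represented by'' means, which is how the paper's Stallings-folding setup produces $Y$. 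As for what each approach buys: the paper's citation is economical, while your proof makes the result self-contained and isolates exactly the mechanism --- full stars in finite coverings of a rose --- that the paper reuses later, in the valence count of \cref{lemma:index rank relation} and in verifying that the pullback map is a covering in the proof of \cref{theorem:IEHNC}.
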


\begin{proof}
See \cite{Sta}, remark 7.6 on page 562
\end{proof}

\begin{remark}
Note that this in particular implies that the number of vertices in $Y$ is equal to the index of $H$ in $F$. This is an instance of a more general result from the theory of covering spaces, which will be used again below: if $f: (Y, y_0) \to (X, x_0)$ is a based covering map then the index of $f_*\pi_1(Y, y_0)$ in $\pi_1(X, x_0)$ is equal to the number of sheets in the covering.
\end{remark}

\section{Inclusion/Exclusion Hanna Neumann Conjecture}

In order to prove \cref{theorem:IEHNC}, we will make use of the following result:

\begin{lemma}
\label{lemma:index rank relation}
Let $H$ have finite index in $F$. Then
\begin{equation*}
\overline r(H) = \overline r(F) |F: H|
\end{equation*}

Additionally, if $H$ is represented by a covering $g: Y \to Z$ where $\pi(Z, z_0) \cong F$, then
\begin{equation*}
\overline r(H) = \overline r(F) \frac{|V(Y)|}{|V(Z)|}
\end{equation*}
\end{lemma}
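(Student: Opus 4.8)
The plan is to reduce every quantity to an Euler characteristic and then exploit the fact that Euler characteristic is multiplicative under finite coverings. The starting point is the remark that $\overline r(H) = -\chi(X)$ whenever $X$ is a finite graph with $\pi_1(X) \cong H$ and $H$ is nontrivial. First I would dispose of the degenerate case: since $F$ is a nontrivial, hence infinite, free group and $H$ has finite index, $H$ is itself infinite and so nontrivial. Thus $\overline r(H) = \text{rank}(H) - 1 = -\chi(X)$, and likewise $\overline r(F) = -\chi(Z)$, so that the $\max(0, -)$ in the definition of reduced rank never truncates and can be ignored throughout.

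The engine of the argument is the claim that an $n$-sheeted covering of finite connected graphs $g : Y \to Z$ multiplies Euler characteristic by $n$. With the conventions here, edges come in pairs $e, \overline e$, so $\chi(X) = |V(X)| - \tfrac{1}{2}|E(X)|$. Because $g$ is a covering, the induced map on stars is a bijection at every vertex; an edge $e$ from $z$ to $z'$ therefore lifts uniquely through each vertex of the fibre over $z$, which produces a bijection between $g^{-1}(z)$, the edge-fibre $g^{-1}(e)$, and $g^{-1}(z')$. Hence the fibre cardinality is locally constant, and by connectedness of $Z$ it equals some fixed $n$ over every vertex and every edge. This gives $|V(Y)| = n\,|V(Z)|$ and $|E(Y)| = n\,|E(Z)|$, whence $\chi(Y) = n\,\chi(Z)$.

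I would then prove the two equations in reverse order. For the \emph{additionally} clause, the covering $g : Y \to Z$ has exactly $n = |V(Y)|/|V(Z)|$ sheets, so
\[
\overline r(H) = -\chi(Y) = -n\,\chi(Z) = n\,\overline r(F) = \frac{|V(Y)|}{|V(Z)|}\,\overline r(F).
\]
For the first equation I would specialise to a rose: represent $F$ as $\pi_1(Z, z_0)$ with $Z$ a wedge of $\text{rank}(F)$ loops, so $|V(Z)| = 1$. By \cref{lemma:covering iff finite index}, the finite-index subgroup $H$ is represented by a covering $g : Y \to Z$, and by the remark following that lemma the number of sheets is $|F : H|$, so $|V(Y)| = |F : H|$. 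Substituting into the formula just obtained yields $\overline r(H) = \overline r(F)\,|F : H|$.

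The step demanding the most care is the constancy of the fibre size underlying the multiplicativity of $\chi$: one must check that the star bijection forces the vertex- and edge-fibres to have a common cardinality across the connected graph $Z$, and that this common value is precisely the index $|F : H|$ identified by the covering-space remarks above. Once that is in hand, both equations follow by direct substitution.
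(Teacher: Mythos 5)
Your proof is correct, and it reaches the same two identities by a mildly different route, in the reverse order. The paper proves the index formula first and concretely: representing $F$ by a rose $X$ and $H$ by an immersion $W \to X$, it uses \cref{lemma:covering iff finite index} to see that $W \to X$ is a covering with $|F:H|$ vertices, counts edges via the valence $2\,\mathrm{rk}(F)$ at each vertex to get $\chi(W) = (1-\mathrm{rk}(F))\,|F:H|$, and then deduces the ``additionally'' clause from this by noting that $|F:H| = |V(Y)|/|V(Z)|$, both sides being the number of sheets of $g : Y \to Z$. You go the other way: you first establish the general lemma that Euler characteristic of finite connected graphs is multiplicative under $n$-sheeted coverings, proved by showing that the vertex- and edge-fibres have a common constant cardinality, which gives the covering statement for an arbitrary base; the index formula then follows by specializing the base to a rose and invoking \cref{lemma:covering iff finite index} together with the sheets-equals-index remark. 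The mathematical content is the same --- the paper's valence count is precisely your fibre-constancy argument in the special case of a one-vertex base, and the paper's sheet-counting step quietly relies on the same constancy --- but your organization is slightly more general (the multiplicativity lemma needs nothing about roses, and both equations drop out of it by substitution), and your explicit check that $H$ is nontrivial (having finite index in an infinite group), so that the $\max(0,\cdot)$ in the definition of reduced rank never truncates, makes precise a point the paper passes over quickly.
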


\begin{proof}
By picking a free basis for $F$, we can represent $F$ by a rose with $\text{rk}(F)$ petals, say $X$, and $H$ by an immersion of finite graphs $f: W \to X$.

By \cref{lemma:covering iff finite index}, $f$ is a covering and $W$ has $|F:H|$ vertices. $W$ has $\text{rk}(F) |F:H|$ edges (each vertex of $W$ has valence $2 \, \text{rk}(F)$ since $W$ is a covering), so 
\begin{equation*}
\chi(W) = (1 -\text{rk}(F))|F:H|
\end{equation*}
Since $F$ is non-trivial, $\overline r(F) = \text{rk}(F) - 1$ and $\overline r(H) = -\chi(Y)$, which gives the first equality.

To get the second equality, we note that $|F:H| = |V(Y)|/|V(Z)|$ (since both sides are equal to the number of sheets of the covering ${g: Y \to Z}$).
\end{proof}

We are now in a position to prove \cref{theorem:IEHNC}:

\iehnc*

\begin{proof}
Firstly, we note that we can take $F$ to be $H \vee K$:  we have ${K \leq (H\vee K) \leq F}$ and so $|F:K| = |F:H\vee K| |H \vee K: K|$, hence $K$ is of finite index in $H \vee K$.

Identify $F$ with $\pi_1(X, x_0)$, where $X$ is a rose with $\text{rk}(F)$ petals. Let $H$ (resp. $K$) be represented by the immersion of finite graphs $g_1: (Y, y_0) \to (X, x_0)$ (resp. ${g_2: (Z, z_0) \to (X, x_0)}$), and construct the pullback $Y \times_X Z$. Let $W$ be the component of the pullback that contains the vertex $w_0 := (y_0, z_0)$. Then, by \cref{theorem:pullback is intersection}, $\pi_1(W, w_0) \cong H \cap K$.

\begin{center}
\begin{tikzcd}
(W, w_0) \arrow[two heads]{r}{f_1} \arrow{d}{f_2}
& (Y, y_0) \arrow{d}{g_1}\\
(Z, z_0) \arrow[two heads]{r}{g_2} &(X, x_0)
\end{tikzcd}\hspace{40pt}
\begin{tikzcd}
H \cap K \arrow{r}{(f_1)_*} \arrow{d}{(f_2)_*}
& H \arrow{d}{(g_1)_*}\\
K \arrow{r}{(g_2)_*} & F
\end{tikzcd}
\end{center}

$(W, w_0)$ is a based covering space of $(Y, y_0)$, by construction of the pullback. Indeed, given any vertex $(y, z) \in W$, we have a bijection $\text{St}_Z(z) \to \text{St}_X(x_0)$ and hence for each edge $e$ in $\text{St}_Y(y)$ we have exactly one edge in $\text{St}_Z(z)$ whose image in $\text{St}_X(x_0)$ is the same as the image of $e$.

By \cref{lemma:index rank relation} applied to $f_1: W \to Y$ we have that 

\begin{equation*}
\overline r(H \cap K) = \overline r(H) \frac{|V(W)|}{|V(Y)|}
\end{equation*}

Since $W$ is a subgraph of the product $Y \times Z$, we also have

\begin{equation*}
|V(W)| \leq |V(Y)| \, |V(Z)|
\end{equation*}
and so 
\begin{equation*}
\overline r(H \cap K) \leq \overline r(H) |V(Z)|
\end{equation*}

Finally, we can use \cref{lemma:index rank relation} on $g_2: Z \to X$ to obtain
\begin{equation*}
\bar{r}(K) = \bar r(F) \; |V(Z)|
\end{equation*}
and so
\begin{equation*}
\overline r(H \cap K) \, \overline r(H \vee K) \leq \overline r (H) \, \overline r(K)
\end{equation*}
as desired.
\end{proof}

\section{Counterexamples}
\label{section:counterexamples}

The IEHNC does not necessarily hold when neither $H$ nor $K$ is of finite index in $H \vee K$. Indeed, we can show that there is no $c > 0$ such that
\begin{equation*}
\overline r(H \cap K) \, \overline r(H \vee K) \leq c \, \overline r(H) \, \overline r (K)
\end{equation*}
holds for all $H, K$ finitely generated subgroups of $F$. An example demonstrating this is given below.

\begin{example}
\label{example:IEHNC counterexample}
\begin{figure}
	\begin{tikzpicture}[node distance=1.3cm,>=stealth',bend angle=45,auto]

  \tikzstyle{vertex}=[circle,fill=black,inner sep=0.5mm]  
  \tikzstyle{rootnode}=[circle,draw=black,inner sep=0.1mm]  

  \tikzstyle{place}=[circle,thick,draw=blue!75,fill=blue!20,minimum size=6mm]

  \tikzstyle{red place}=[place,draw=red!75,fill=red!20]
  \tikzstyle{transition}=[rectangle,thick,draw=black!75,
  			  fill=black!20,minimum size=4mm]

  \tikzstyle{every label}=[red]

  \begin{scope}
    % Pullback
    \node at (-1.5, 0) {$Y \times_X Z$};

	\node at (1.5, 0.6) {$v$ vertices};
	\draw [decorate, decoration={brace}] (0, 0.3) -- (3, 0.3);        
    
    \node at (0,0) [rootnode] (root) {$\ast$};
    \node at (1,0) [vertex] (n1)   {};

    \draw [->, bend left, color=red] (root) to (n1);
    \draw [->, bend right, color=blue] (root) to (n1);

    \node at (2,0)          (gap)  {...};

    \draw [->, bend left, color=red] (n1) to (gap);
    \draw [->, bend right, color=blue] (n1) to (gap);

    \node at (3,0) [vertex] (n2)   {};
    
    \draw [->, bend left, color=red] (gap) to (n2);
    \draw [->, bend right, color=blue] (gap) to (n2);
  \end{scope}

  \begin{scope}[xshift=6.5cm]
    % Y
    \node at (0, 0) [rootnode] (root) {$\ast$};
    
    \node at (-0.85, 0.6) {$\ell$ loops};
	\draw [decorate, decoration={brace}] (-1.6, 0.3) -- (-0.1, 0.3);    
    \draw [->, loop left, draw=pink1, min distance=5mm] (root) to (root);
    \draw [->, loop left, draw=pink2, min distance=10mm] (root) to (root);
    \draw [->, loop left, draw=pink3,min distance=20mm] (root) to (root);
    \node at (-1.2, 0) {$\ldots$};
    
    \node at (1,0) [vertex] (n1)   {};

    \draw [->, bend left, color=red] (root) to (n1);
    \draw [->, bend right, color=blue] (root) to (n1);

    \node at (2,0)          (gap)  {$\ldots$};

    \draw [->, bend left, color=red] (n1) to (gap);
    \draw [->, bend right, color=blue] (n1) to (gap);

    \node at (3,0) [vertex] (n2)   {};
    
    \draw [->, bend left, color=red] (gap) to (n2);
    \draw [->, bend right, color=blue] (gap) to (n2);
    
    \node at (4,0) {$Y$};
  \end{scope}
  
  \begin{scope}[yshift=-2cm]
  	%Z
  	\node at (0,0) [rootnode] (root) {$\ast$};
  	\draw [->, loop right, color=red] (root) to (root);
  	\draw [->, loop below, color=blue] (root) to (root);
  	\node at (-1.5, 0) {$Z$};
  \end{scope}
  
  \begin{scope}[xshift=6.5cm, yshift=-2cm]
  	%X
  	\node at (0,0) [rootnode] (root) {$\ast$};
  	\draw [->, loop left, draw=pink1, min distance=5mm] (root) to (root);
    \draw [->, loop left, draw=pink2, min distance=10mm] (root) to (root);
    \draw [->, loop left, draw=pink3,min distance=20mm] (root) to (root);
    \node at (-1.2, 0) {$\ldots$};
 	\draw [->, loop right, color=red] (root) to (root);
  	\draw [->, loop below, color=blue] (root) to (root);
  	\node at (1.5, 0) {$X$};
  \end{scope}

  \draw [->] (3.5,0) -- (4.5,0);
  \draw [->] (0,-0.5) -- (0,-1.5);
  \draw [->] (3.5,-2) -- (4.5,-2);
  \draw [->] (6.5,-0.5) -- (6.5,-1.5);

\end{tikzpicture}
	\caption{Counterexample to the IEHNC}
	\label{figure:IEHNC counterexample}
\end{figure}
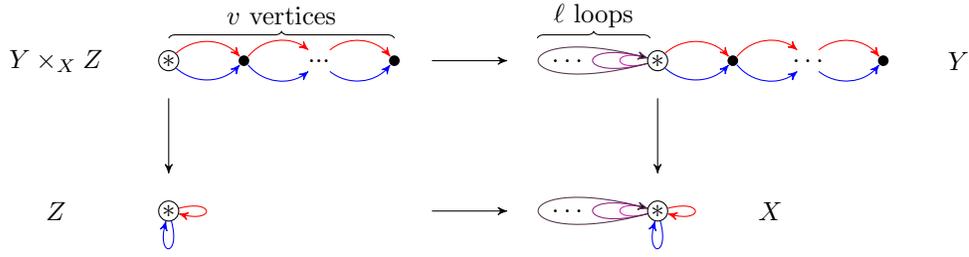

Let
\begin{equation*}
H = \langle x_2 x_1^{-1}, x_1 x_2 x_1^{-2}, \ldots, x_1^{v-2}x_2x_1^{-(v-1)}, x_3, x_4, \ldots, x_{\ell + 2} \rangle
\end{equation*}
and
\begin{equation*}
K = \langle x_1, x_2 \rangle 
\end{equation*}
We then get
\begin{gather*}
H \cap K = \langle x_2 x_1^{-1}, x_1 x_2 x_1^{-2}, \ldots, x_1^{v-2}x_2x_1^{-(v-1)} \rangle \\
H \vee K = \langle x_1, \ldots, x_{\ell + 2} \rangle
\end{gather*}
These are illustrated in the pushout diagram shown in \cref{figure:IEHNC counterexample}, in which ${H \cong \pi_1(Y, y_0)}$ and $K \cong \pi_1(Z, z_0)$. For this choice of $H$ and $K$ we have
\begin{equation*}
\frac{\overline r(H \cap K) \, \overline r(H \vee K)}{\overline r(H) \, \overline r(K)} = \frac{(v-2)(\ell+1)}{v+\ell-2}
\end{equation*}

Setting $v = \ell$, we obtain $(v^2 - v-2)/(2v-2)$, so as $v \to \infty$ the ratio gets arbitrarily large.
\end{example}
\vspace{5mm}

Using the graph generation algorithm detailed in \cref{section:graph generation}, we were able to find a counterexample to Guzman's ``group-theoretic conjecture'' as well.

\begin{example}
\label{example:guzman counterexample}

Let $F = F(a, b, c, d, x, y)$, and let
\begin{gather*}
H = \langle a, b, x, y^2, yxy^{-1} \rangle \\
K = \langle c, d, y, x^2, xyx^{-1} \rangle
\end{gather*}

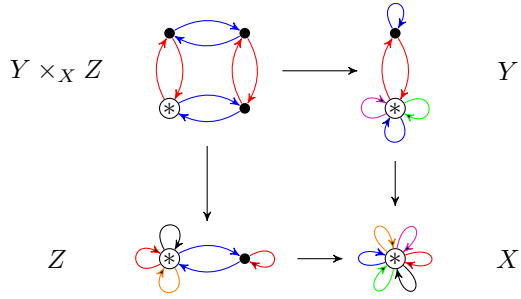
\begin{figure}
	\begin{center}
	\begin{tikzpicture}[node distance=1.3cm,>=stealth',bend angle=45,auto]

  \tikzstyle{vertex}=[circle,fill=black,inner sep=0.5mm]  
  \tikzstyle{rootnode}=[circle,draw=black,inner sep=0.1mm]  

  \tikzstyle{place}=[circle,thick,draw=blue!75,fill=blue!20,minimum size=6mm]

  \tikzstyle{red place}=[place,draw=red!75,fill=red!20]
  \tikzstyle{transition}=[rectangle,thick,draw=black!75,
  			  fill=black!20,minimum size=4mm]

  \tikzstyle{every label}=[red]

  \begin{scope}
      % Pullback
      \node at (-1.5, 0.5) {$Y \times_X Z$};
    
      \node at (0,0) [rootnode] (bl) {$\ast$};
      \node at (1,0) [vertex] (br)   {};
      \node at (0,1) [vertex] (tl) {};
      \node at (1,1) [vertex] (tr) {};

      \draw [->, bend left=30,  color=blue] (tr) to (tl);
      \draw [<-, bend right=30, color=blue] (tr) to (tl);
      \draw [->, bend left=30, color=red] (tl) to (bl);
      \draw [<-, bend right=30, color=red] (tl) to (bl);
      \draw [->, bend left=30,  color=blue] (br) to (bl);
      \draw [<-, bend right=30, color=blue] (br) to (bl);
      \draw [->, bend left=30, color=red] (br) to (tr);
      \draw [<-, bend right=30, color=red] (br) to (tr);
  \end{scope}

  \begin{scope}[xshift=3cm]
    % Y
    \node at (0, 0) [rootnode] (b) {$\ast$};
    \node at (0, 1) [vertex] (t) {};
    
    \draw [->, loop left, draw=pink1, distance=5mm, out=210, in=150] (b) to (b);
    \draw [->, loop below, draw=blue, distance=5mm, out=-60, in=-120] (b) to (b);
    \draw [->, loop right, draw=green, distance=5mm, out=30, in=-30] (b) to (b);
    
    \draw [->, bend left=30, color=red] (b) to (t);
    \draw [<-, bend right=30, color=red] (b) to (t);
    
    \draw [->, loop above, draw=blue, distance=5mm, out=120, in=60] (t) to (t);
    
    \node at (1.5,0.5) {$Y$};
  \end{scope}
  
  \begin{scope}[yshift=-2cm]
    % Z
    \node at (0, 0) [rootnode] (l) {$\ast$};
    \node at (1, 0) [vertex] (r) {};
    
    \draw [->, loop left, draw=red, distance=5mm, out=210, in=150] (l) to (l);
    \draw [->, loop below, draw=orange, distance=5mm, out=-60, in=-120] (l) to (l);
    \draw [->, loop above, draw=black, distance=5mm, out=120, in=60] (l) to (l);
    
    \draw [->, bend left=30, color=blue] (l) to (r);
    \draw [<-, bend right=30, color=blue] (l) to (r);
    
    \draw [->, loop right, draw=red, distance=5mm, out=30, in=-30] (r) to (r);
    \node at (-1.5,0) {$Z$};    
  \end{scope}
  
  \begin{scope}[xshift=3cm, yshift=-2cm]
  	%X
  	\node at (0,0) [rootnode] (root) {$\ast$};
  	\draw [->, loop, draw=red, out=20, in=-20, distance=5mm] (root) to (root);
  	\draw [->, loop, draw=pink1, out=80, in=40, distance=5mm] (root) to (root);
  	\draw [->, loop, draw=orange, out=140, in=100, distance=5mm] (root) to (root);
 	\draw [->, loop, draw=blue, out=200, in=160, distance=5mm] (root) to (root);
  	\draw [->, loop, draw=green, out=260, in=220, distance=5mm] (root) to (root);
  	\draw [->, loop, draw=black, out=320, in=280, distance=5mm] (root) to (root);

  	\node at (1.5, 0) {$X$};
  \end{scope}

  \draw [->] (1.5,0.5) -- (2.5,0.5);
  \draw [->] (0.5,-0.5) -- (0.5,-1.5);
  \draw [->] (1.7,-2) -- (2.3,-2);
  \draw [->] (3,-0.7) -- (3,-1.3);

\end{tikzpicture}
	\end{center}
	\caption{Counterexample to conjecture \ref{conjecture:guzman} (Guzman)}
	\label{figure:guzman counterexample}
\end{figure}

We then get
\begin{gather*}
H \cap K = \langle y^2, yx^2y^{-1}, x^2, yxy^{-1}x, yxyx \rangle \\
H \vee K = \langle a, b, c, d, x, y \rangle
\end{gather*}
and so disprove Guzman's conjecture.

These are illustrated in the pushout diagram shown in \cref{figure:guzman counterexample} (where red and blue correspond to $x$ and $y$).
\end{example}

\section{Graph-based generation algorithm}
\label{section:graph generation}

In order to investigate the above questions about subgroups of free groups, and to find the first counterexamples (though not the ones presented above), we used GAP to generate random subgroups of free groups.

Historically, random subgroups of free groups were usually generated by the ``word-based distribution'', in which $k$-tuples of reduced words $(g_1, \ldots, g_k)$ are chosen in $F$, with each $g_i$ having length less than some fixed $n$. We then consider the subgroup $H = \langle g_1, \ldots, g_k \rangle$.

Recently, Bassino, Nicaud and Weil proposed the ``graph-based distribution'', which generates a random Stallings graph with a fixed number of vertices, and then computes its fundamental group (see section 3 of \cite{bass}). The algorithm first generates a random $r(F)$-tuple of partial injections (by a procedure given explicitly in \cite{bass}), with an $a$-labelled edge going from $v$ to $w$ in the Stallings graph if and only if the partial injection corresponding to $a$ sends $v$ to $w$. If the generated graph is not connected, or has leaves other than the basepoint, then it is discarded and a new one is generated (a ``rejection algorithm''). The fundamental group of this graph can be found by constructing a spanning tree, and then the GAP package FGA was used to calculate the intersection, join and rank of the subgroups generated.

Both the word- and graph-based distributions were able to generate counterexamples; however, the proportion of examples checked that were counterexamples is significantly higher in the graph-based distribution (\cref{figure:IEHNC counterexample frequencies}), as was the proportion of examples which had a non-trivial intersection (\cref{figure:IEHNC trivial frequencies}). Interestingly, the dip in proportion of non-trivial intersections in the graph-based distribution coincides with the peak of the proportion of counterexamples generated, contrary to what might be na\"ively expected; we are unable to explain this.

These figures suggest that the graph-based distribution is better-suited for investigating similar questions using computers.

\begin{figure}
	\begin{center}
	\includegraphics[width=35em]{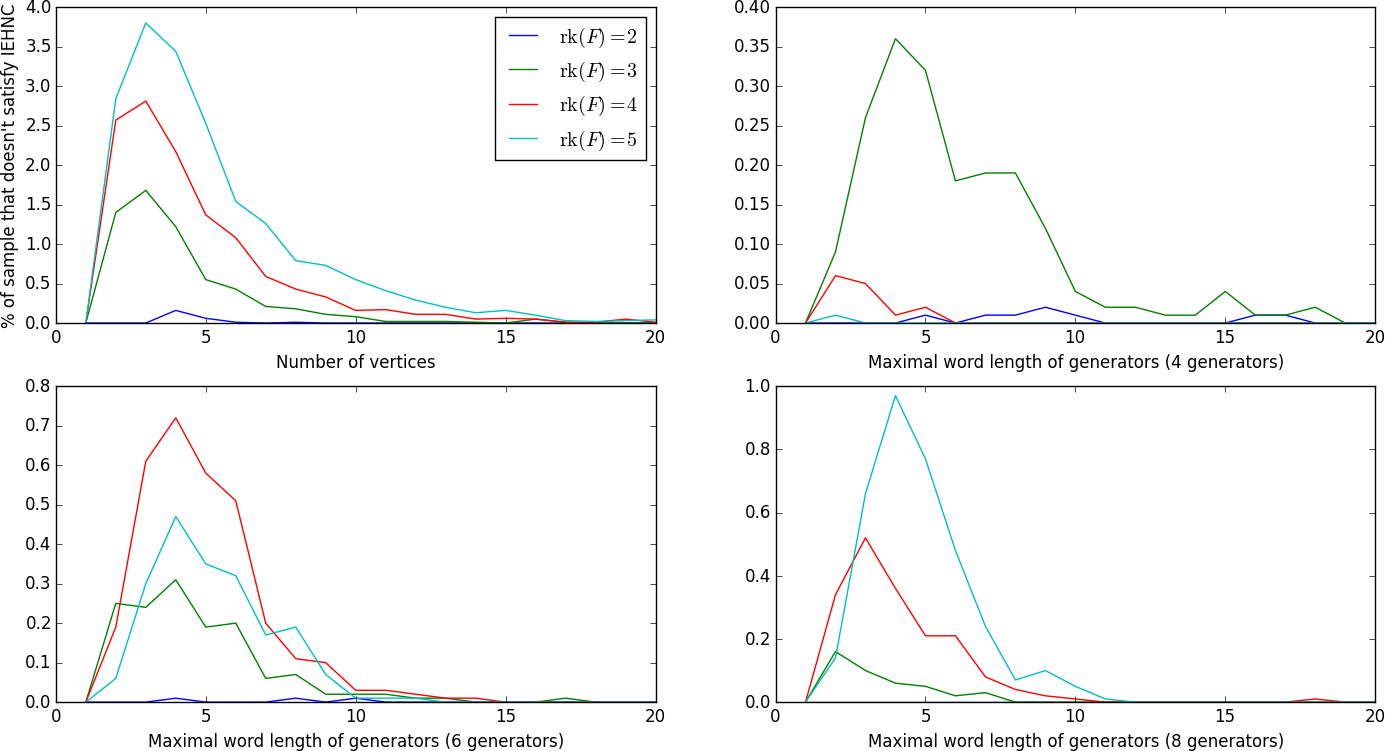}
	\end{center}
	\caption{The percentage of pairs of subgroups out of a sample of 10,000 randomly-generated subgroup pairs that fail to satisfy the IEHNC, against the parameter to the model (either vertices or maximal length of generating word). The sample contained only subgroups whose reduced ranks are strictly positive, as otherwise the IEHNC holds trivially. The subplots are (left-to-right, top-to-bottom): graph-based, word-based with 4 generators, word-based with 6 generators, and word-based with 8 generators. The different trendlines represent different ranks of the ambient group, as indicated in the legend.}
	\label{figure:IEHNC counterexample frequencies}
\end{figure}

\begin{figure}
	\begin{center}
	\includegraphics[width=35em]{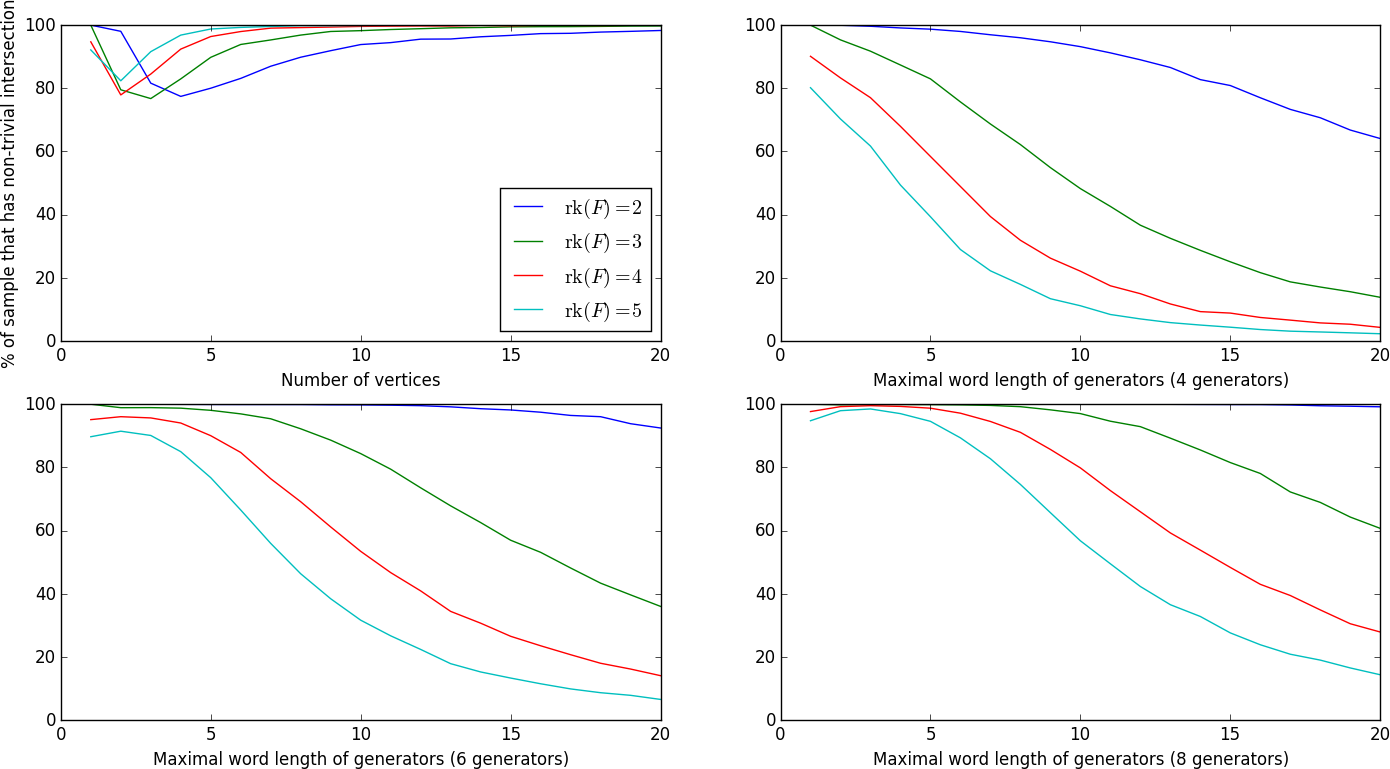}
	\end{center}
	\caption{The percentage of pairs of subgroups out of a sample of 10,000 randomly-generated subgroup pairs that have a non-trivial intersection, against the parameter to the model (either vertices or maximal length of generating word). The sample used was the same as for \cref{figure:IEHNC counterexample frequencies}. The subplots and trendlines are also the same as in that figure.}
	\label{figure:IEHNC trivial frequencies}
\end{figure}

\newpage


\begin{thebibliography}{99}

\bibitem[Guz]{Guz} Rosemary Guzman, \emph{Hyperbolic 3-manifolds with $k$-free fundamental group}, Topology and its Applications, volume 173, pp. 142--156, 2014. (arXiv:1201.5911 [math.GT])
\bibitem[Sta]{Sta}  John R. Stallings, \emph{Topology of Finite Graphs}, Inventiones mathematicae, volume 71, issue 3, pp. 551--565, 1983
\bibitem[Neu]{Neu} Walter Neumann, \emph{On intersections of finitely generated subgroups of free groups}, Groups---Canberra 1989, Lecture Notes in Mathematics, volume 1456, Springer, Berlin, pp. 161--170, 1990
\bibitem[Fri]{Fri} Joel Friedman, \emph{Sheaves on Graphs, Their Homological Invariants, and a Proof of the Hanna Neumann Conjecture}, Memoirs of the AMS, volume 233, number 1100, 2015. (arXiv:1105.0129 [math.CO])
\bibitem[Min]{Min} Igor Mineyev, \emph{Groups, graphs, and the Hanna Neumann Conjecture}, Journal of Topology and Analysis, volume 4, issue 1, 2012
\bibitem[Ken]{Ken} Richard P. Kent IV, \emph{Intersections and joins of free groups}, Algebraic \& Geometric Topology, volume 9, issue 1, pp. 305--235, 2009. (arXiv:0901.3774 [math.GR])
\bibitem[Lou]{Lou} Larsen Louder and D. B. McReynolds, \emph{Graphs of subgroups of free groups}, Algebraic \& Geometric Topology, volume 9, issue 1, pp. 327--335, 2009. (arXiv:0901.3774 [math.GR])
\bibitem[Bas]{bass}  F. Bassino, C. Nicaud and P. Weil, \emph{Random generation of finitely generated subgroups of a free group}, International Journal of Algebra and Computation, volume 18, issue 02, pp. 375--405, 2008. (arXiv:0707.3185 [math.GR])
\bibitem[Bas2]{bass2} F. Bassino, A. Martino, C. Nicaud, E. Ventura and P. Weil, \emph{Statistical properties of subgroups of free groups}, Random Struct. Alg., volume 42, pp. 349--373, 2013. (arXiv:1001.4472 [math.GR])
\bibitem[GAP]{GAP4}
  The GAP~Group, \emph{GAP -- Groups, Algorithms, and Programming, 
  Version 4.7.8}; 
  2015,
  \texttt{http://www.gap-system.org}.
\bibitem[FGA]{FGA} C. Sievers, \emph{Free Group Algorithms - a GAP package, Version 1.2.0}; 2012, {\texttt{http://www.gap-system.org/Packages/fga.html}}


\end{thebibliography}
\end{document}